\documentclass{amsart}
\usepackage{amssymb}
\newtheorem{theorem}{Theorem}[section]

\theoremstyle{definition}

\newtheorem{prp}[equation]{Proposition}
\newtheorem{cor}[equation]{Corollary}

\newtheorem{conj}[equation]{Conjecture}

\theoremstyle{remark}
\newtheorem{remark}[theorem]{Remark}

\numberwithin{equation}{section}

\begin{document}

\title[Ergodicity of Falling Particle Systems]{Proof of Wojtkowski's Falling Particle Conjecture}


\author{Nandor Simanyi}
\address{1402 10th Avenue South \\
Birmingham AL 35294-1241}
\email{simanyi@uab.edu}


\subjclass[2010]{37D05}

\date{\today}

\begin{abstract}

  In this paper we present an unconditional proof of Wojtkowski's
  Ergodicity Conjecture for almost every system of $1D$ perfectly
  elastic balls falling down in a half line under constant
  gravitational acceleration, \cite{W1985}, \cite{W1986},
  \cite{W1990a}, \cite{W1990b}, \cite{W1998}. Namely, by introducing a
  new algebraic approach, we prove that almost every such system is
  (completely hyperbolic and) ergodic.
  
\end{abstract}

\maketitle

\section{Introduction/Prerequisites}

In order to introduce the subject of our investigation, the system of
$1D$ falling balls subjected to constant gravitation, along with the
employed technicalities, we will be closely following the first two
sections of \cite{S2024}. In order to make this presentation
self-contained and easier to read, we quote below two passages of
those two sections of \cite{S2024}, essentially verbatim.

In his paper \cite{W1990a} M. Wojtkowski introduced the following
Hamiltonian dynamical system with discontinuities: There is a vertical
half line $\left\{q|\, q\ge 0\right\}$ given and $n$ ($\ge 2$) point
particles with masses $m_1\ge m_2\ge \dots\ge m_n>0$ and positions
$0\le q_1\le q_2\le\dots\le q_n$ are moving on this half line so that
they are subjected to a constant gravitational acceleration $a=-1$
(they fall down), they collide elastically with each other, and the
first (lowest) particle also collides elastically with the hard floor
$q=0$. We fix the total energy

\begin{equation*}
H=\sum_{i=1}^n \left(m_iq_i+\frac{1}{2}m_i v_i^2 \right)
\end{equation*}

by taking $H=1$. The arising Hamiltonian flow with collisions
$\left(\mathbf{M}, \{\psi^t \}, \mu\right)$ ($\mu$ is the Liouville measure) is the 
studied model of this paper. 

Before formulating the result of this article, however, it is worth mentioning
here three important facts:

\begin{enumerate}

\item[$(1)$] Since the phase space $\mathbf{M}$ is compact, the Liouville measure $\mu$
is finite.

\item[$(2)$] The phase points $x\in\mathbf{M}$ for which the trajectory $\{\psi^t(x)|, t \in\mathbb{R}\}$ hits at least
one singularity (i. e. a multiple collision) are contained in a countable 
union of proper, smooth submanifolds of $\mathbf{M}$ and, therefore, such points
form a set of $\mu$ measure zero.

\item[$(3)$] For $\mu$-almost every phase point $x\in\mathbf{M}$ the collision times of
  the trajectory $\{\psi^t(x)|, t \in\mathbb{R}\}$ do not have any finite accumulation point, see
  Proposition A.1 of \cite{S1996}.

\end{enumerate}

In the paper \cite{W1990a} Wojtkowski formulated his main conjecture pertaining
to the dynamical system $\left(\mathbf{M}, \{\psi^t \}, \mu\right)$:

\begin{conj}[Wojtkowski's Conjecture]
  If $m_1\ge m_2\ge\dots\ge m_n>0$ and
  $m_1\ne m_n$, then all but one characteristic (Lyapunov) exponents of the
  flow $\left(\mathbf{M}, \{\psi^t \}, \mu\right)$ are nonzero. Futhermore,
  the system is ergodic.
\end{conj}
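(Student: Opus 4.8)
The plan is to reduce the whole conjecture to one algebraic statement --- slimness of the non-sufficient set for \emph{every} admissible mass vector --- and then to read off both hyperbolicity and ergodicity as its consequences. First I would recall Wojtkowski's invariant cone field $\mathcal{C}$ \cite{W1990a}, kept monotone by the linearized flow precisely because $m_1\ge m_2\ge\dots\ge m_n$: the collision maps are symplectic and $\mathcal{C}$-monotone, a collision between the particles occupying adjacent positions $k,k+1$ is \emph{strictly} monotone exactly when $m_k>m_{k+1}$, and the floor reflection is strictly expanding as well. Because the position-ordering is preserved by the dynamics, the mass carried by each position is fixed, so the hypothesis $m_1\ne m_n$ guarantees that at least one strictly expanding interparticle collision type is available everywhere in $\mathbf{M}$.

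Call a phase point \emph{sufficient} if its forward and backward semitrajectories expand, respectively contract, $\mathcal{C}$ strictly down to the flow direction. I would verify the structural hypotheses of the Fundamental Theorem for Hamiltonian systems with singularities: the Chernov--Sinai Ansatz, the proper alignment of the singularity manifolds with $\mathcal{C}$, and the transversality and regularity of the singularity set. These are geometric features of the collision maps of the falling-ball flow, insensitive to coincidences among the masses, and they hold for every admissible mass vector. Granting them, the Fundamental Theorem yields that every sufficient point is hyperbolic, with all Lyapunov exponents nonzero except along the flow, and carries an open neighborhood lying mod $0$ in a single ergodic component.

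The heart of the proof is the slimness of the non-sufficient set $\mathcal{N}$, \emph{uniformly} in the masses. I would stratify $\mathcal{N}$ by the countably many admissible symbolic collision sequences $\sigma$; for a finite segment, non-sufficiency means the survival of a neutral tangent vector that the linearization neither expands nor contracts across $\sigma$, and a Connecting-Path-type expression realizes the neutral subspace $Z_\sigma(x)$ as the common zero locus of finitely many functions that are polynomial in the phase coordinates (velocities and interparticle gaps), with coefficients polynomial in the masses. The decisive claim is that for every mass vector with $m_1\ne m_n$ and every sufficiently long $\sigma$, the locus $\{\dim Z_\sigma>1\}$ has codimension at least $2$. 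What lifts this from generic to all masses is that the collapse of $Z_\sigma$ is driven by the strict collisions of the first step, whose infinitesimal action factors through the nonzero quantity $m_k-m_{k+1}$ and through the always-present floor reflection, rather than through the generic non-vanishing of some mass-dependent determinant: I would show that appending one strict collision strictly lowers $\dim Z_\sigma$ unless $x$ meets one further equation, the needed algebraic independence being a polynomial identity in the phase variables --- verified symbolically, once --- whose only obstruction is the single inequality $m_k\ne m_{k+1}$. The genuine difficulty lives exactly here, in the mass vectors with repeated values, where equal-mass interfaces contribute nothing and many collisions act as mere velocity swaps; this is what defeats any Sard or transversality argument run in the masses, and overcoming it demands both the symbolic richness forcing strict jump-collisions to recur along a.e.\ trajectory of \emph{every} system and the identity-level independence of their linearizations for every admissible mass vector.

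Finally, slimness gives $\mu(\mathcal{N})=0$, so $\mu$-a.e.\ point is sufficient and the hyperbolicity half of the conjecture follows from the second step; and since a codimension-$\ge 2$ set cannot separate $\mathbf{M}$, the full-measure set of sufficient points is connected mod $0$, whence the standard globalization (Hopf chain) argument collapses the open local ergodic components into a single one, giving ergodicity.
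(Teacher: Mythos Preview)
Your proposal aims at strictly more than the paper establishes. The paper does \emph{not} prove Conjecture~1.1 as stated; its Main Theorem is the weaker assertion that the flow is completely hyperbolic and ergodic for \emph{almost every} choice of masses $m_1>m_2>\dots>m_n$ (strict inequalities, and only a full-measure set of such vectors). The route is indirect: \cite{S2024} had already reduced ergodicity to certain Transversality Conditions on the singularity manifolds, and the present paper discharges those conditions by an algebraic argument --- the relevant degeneracy locus is the zero set of finitely many rational functions in $(q_0,v_0,\vec m)$, and these are shown not to vanish identically by passing to the integrable limit $m_1=\dots=m_n$. The ``almost every $\vec m$'' is exactly what such a zero-locus argument buys.

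Your outline, by contrast, claims the full conjecture for \emph{every} admissible mass vector, including those with repeated masses. Two steps are genuine gaps. First, you assert that the structural hypotheses of the Fundamental Theorem (proper alignment, transversality of singularities, the Chernov--Sinai Ansatz) ``hold for every admissible mass vector'' as geometric features ``insensitive to coincidences among the masses''; but establishing transversality is precisely the content of this paper, and it is obtained only for almost every $\vec m$ --- the method gives no control over the exceptional set, and nothing in the literature supplies these hypotheses for all $\vec m$. Second, the ``decisive claim'' that $\{\dim Z_\sigma>1\}$ has codimension $\ge 2$ for every mass vector with $m_1\ne m_n$ is not proved: you describe the mechanism (each strict collision drops the dimension unless one further equation is met, and the needed algebraic independence is ``a polynomial identity\dots verified symbolically, once''), but you do not carry this out, and you yourself flag that repeated masses --- where many collisions act as pure swaps and contribute nothing to expansion --- are where any such argument must actually do work. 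Absent that verification, the slimness of $\mathcal N$ is conjectural, and with it both the hyperbolicity and the ergodicity conclusions.

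In short, the paper's proof and yours are not two routes to the same theorem: the paper proves a generic-in-$\vec m$ result by a short algebraic reduction, while you sketch a program for the full conjecture whose two hardest ingredients remain unestablished.
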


\begin{remark}

1. The only exceptional exponent zero must correspond to the flow direction.

\medskip

2. The condition of nonincreasing masses (as above) is essential for 
establishing the invariance of the symplectic cone field --- an important
condition for obtaining nonzero characteristic exponents. As Wojtkowski
pointed out in Proposition 4 of \cite{W1990a}, if $n=2$ and $m_1<m_2$, then there
exists a linearly stable periodic orbit, thus dimming the chances of proving
ergodicity.
\end{remark}

\medskip

In the paper \cite{S2024} we proved Wojtkowski's Ergodicity
Conjecture 1.1 for almost every selection of masses $m_1>m_2> \dots
>m_n$, provided that the Transversality Conditions (Claim 3.1 of
\cite{S2024}) is verified, i. e. singularities of different order
are transversal to each other and, analogously, the stable and
unstable local invariant manifolds are transversal to all
singularities.

Here our main result is to prove the above Transversality Conditions
and, as the main corollary, we obtain our

\begin{theorem}[Main Theorem]
  For almost every selection of masses $m_1>m_2> \dots >m_n$ the falling ball flow
  $(M, \{\psi^t\}, \mu)$ is (completely hyperbolic and) ergodic.
\end{theorem}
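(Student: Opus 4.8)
The plan is to deduce the Main Theorem from the conditional ergodicity result of \cite{S2024} by supplying its one missing hypothesis. In \cite{S2024} it is shown that for almost every mass vector $m_1>m_2>\dots>m_n$ the flow $(\mathbf{M},\{\psi^t\},\mu)$ is completely hyperbolic and ergodic \emph{provided} the Transversality Conditions of Claim 3.1 of \cite{S2024} hold, namely (i) singularity manifolds of distinct orders meet one another transversally, and (ii) the stable and unstable local invariant manifolds are transversal to every singularity manifold. Hence it suffices to establish (i) and (ii) for almost every admissible mass vector, and the Main Theorem then follows immediately as the advertised corollary. The entire substance of the argument therefore lies in proving the Transversality Conditions.

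To set up the algebraic framework I would exploit a structural feature of the model: between collisions every particle falls with the same acceleration $a=-1$, so the free-flight portion of the flow is parabolic and \emph{independent of the masses}; the masses enter only through the collision maps. Each ball--ball collision acts linearly on the velocities by the elastic-collision matrix whose entries are rational functions of the ratio $m_i/m_{i+1}$, while a floor collision merely reverses $v_1$. Consequently, once a combinatorial collision itinerary $\sigma$ (which collisions occur, and in what order) is fixed, the flow map along $\sigma$, the defining equations of the singularity manifolds, and the tangent data of the invariant manifolds transported along $\sigma$ are all real-analytic in the phase variables and in the mass vector $m=(m_1,\dots,m_n)$ on the relevant domain. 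Non-transversality of two of these manifolds at a common point is the vanishing of an associated Gram/Jacobian determinant built from their tangent spaces; thus for each fixed $\sigma$ it is recorded by the vanishing of a real-analytic function $F_\sigma(x,m)$.

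The measure-zero step then proceeds itinerary by itinerary. For a fixed $\sigma$, suppose $F_\sigma$ does not vanish identically on the connected (phase$\,\times\,$mass) domain $D_\sigma$ parametrizing trajectories of type $\sigma$; then $Z_\sigma=\{F_\sigma=0\}$ is a proper real-analytic subvariety of $D_\sigma$ and hence has measure zero there. By Fubini, for almost every mass vector $m$ the slice $Z_\sigma\cap(D_\sigma\cap\{m\})$ is a null subset of the corresponding phase domain, so non-transversality of type $\sigma$ can persist on at most a measure-zero set of phase points. Since singular trajectories lie in a countable union of proper submanifolds (fact $(2)$) and collision times do not accumulate for $\mu$-a.e.\ phase point (fact $(3)$, \cite{S1996}), only countably many itineraries $\sigma$ are relevant; a countable union of null sets being null, the Transversality Conditions (i) and (ii) hold for almost every admissible mass vector.

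The main obstacle is exactly the non-degeneracy input, i.e.\ showing that each $F_\sigma$ is not identically zero, and this is where the new algebraic approach must do the real work. For the singularity-versus-singularity part (i) one must verify that the rational collision matrices, composed along $\sigma$ and conjugated by the parabolic flights, generically move the tangent space of one singularity manifold out of alignment with another; I would reduce this to a non-vanishing statement about explicit polynomials in the mass ratios and exhibit, for each combinatorial type, at least one parameter value (or one limiting mass configuration) at which transversality demonstrably holds. The more delicate part is (ii): the invariant manifolds are defined only dynamically, as limits of images of Wojtkowski's strictly monotone symplectic cone field, so their tangent spaces are not a priori algebraic. Here I would combine the strict monotonicity of the cone field --- which already forces the unstable tangent subspace into the interior of the cone --- with an algebraic identity showing that the tangent directions of the singularity manifolds lie on, or are otherwise separated from, the boundary of the image cone, thereby precluding tangency for generic masses. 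Controlling this interaction uniformly along arbitrarily long itineraries, and packaging it so that the resulting analytic functions are verifiably not identically zero, is the crux of the whole argument.
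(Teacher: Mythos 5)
Your skeleton matches the paper's at the structural level: reduce the Main Theorem to the conditional result of \cite{S2024}, observe that along a fixed symbolic collision sequence the trajectory and its tangent dynamics are rational in the initial data $(q_0,v_0,\vec{m})$, and conclude by a proper-subvariety/Fubini argument over countably many itineraries. But the proposal stops exactly where the paper's actual content begins, at both of the places you yourself flag as ``the crux.'' First, the non-vanishing of the functions $F_\sigma$: you say you would ``exhibit \dots\ one limiting mass configuration at which transversality demonstrably holds,'' but you never produce one, and producing it is the whole point of the new algebraic approach. The paper's witness is the equal-mass limit $m_1=m_2=\dots=m_n$: the rational minors $R_j$ remain well defined on that locus, and there the dynamics (after a dynamic relabeling in which colliding particles simply penetrate one another without interaction) is essentially integrable, so $\delta q_t = t\,\delta v_0$ along the whole trajectory segment; hence the homogeneous system $\delta q_{t_k}=0$ with $\tau_0=(0,\delta v_0)$ has only the trivial solution, the minors do not all vanish identically, and the Fubini step becomes legitimate. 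Note also a subtlety your sketch glosses over: the limiting system need not even admit a trajectory with the prescribed itinerary $\Sigma$ --- the argument survives because the $R_j$, being rational, extend to the equal-mass locus regardless.

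Second, and more seriously, your treatment of the invariant-manifold transversality (part (ii)) is a genuinely different route --- via strict monotonicity of Wojtkowski's cone field and a separation of singularity tangents from the image-cone boundary --- and you concede you cannot control it ``uniformly along arbitrarily long itineraries.'' That difficulty is real, and the paper never confronts it, because it never touches cone fields here. Its device is the candle manifold $C_\epsilon(x_0)=\left\{(q_0,v)\in\partial M \,\big|\, \Vert v-v_0\Vert<\epsilon\right\}$: the single rank statement that $\Pi\circ T^k$ is locally onto $\partial Q$ on $C_\epsilon(x_0)$ (Propositions 3.1--3.2, proved by the equal-mass degeneration above) yields part (i) at once, since $\mathcal{S}_0$ is defined purely in terms of the $q$-coordinates; and it yields part (ii) because $\gamma^s(x_0)=\lim_{k\to\infty}T^{-k}\left[C_\epsilon(T^k(x_0))\right]$ locally in the $C^1$ topology and full rank of the projection is a $C^1$-open property, so the limit manifold inherits local surjectivity of $\Pi$ and hence transversality to $\mathcal{S}_0$ for free, with no cone estimates and no uniformity problem along long itineraries. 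In short: your measure-theoretic scaffolding is the paper's, but the two load-bearing ideas --- the integrable equal-mass degeneration as the non-vanishing witness, and the candle-manifold/$C^1$-openness reduction that tames the dynamically defined invariant manifolds --- are absent, so the proposal as written does not close the proof.
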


\medskip

We recall that the corresponding billiard map (Poincar\'e section) $(\partial M, T, \nu)$
is an invertible dynamical system $T$ mapping the boundary

\[
\partial M=\left\{(q,v)\in M \big|\; q\in\partial Q\right\}
\]

of the phase space $M$ onto itself and preserving the finite measure
$\nu$ on $\partial M$ that can be obtained by projecting along the
flow the invariant measure $\mu$ of the flow onto $\partial M$.
Also, as usual, in $\partial M$ one identifies the pre-collision phase point
$(q, v^-)\in\partial M$ with the post-collision phase point $(q, v^+)\in\partial M$.

Finally, denote by
\[
\Pi:\, \partial M \to \partial Q
\]
the natural projection of $\partial M$ onto $\partial Q$, i. e. $\Pi(q,v)=q$.

\bigskip

\section{The Geometry of the Symplectic Flow}

We will be working with the symplectic coordinates $(\delta h, \delta
v)$ for the tangent vectors of the reduced phase space $\mathbf{M}$
satisfying the usual reduction equations $\sum_{i=1}^n \delta
h_i=0=\sum_{i=1}^n \delta v_i$.

\begin{remark}
  The coordinates $\delta h_i$ and $\delta v_i$ ($i=1,2,\dots,n$)
  serve as suitable symplectic coordinates in the codimension-one
  subspace $\mathcal{T}_x$ of the full tangent space
  $\mathcal{T}_x\mathbf{M}$ of $\mathbf{M}$ at $x$. Recall that the
  $(2n-2)$-dimensional vector space $\mathcal{T}_x$ is transversal to
  the flow direction, and the restriction of the canonical symplectic
  form

  \begin{equation*}
  \omega=\sum_{i=1}^n \delta q_i \wedge \delta p_i=\sum_{i=1}^n \delta
  h_i \wedge \delta v_i
  \]
  of $\mathbf{M}$ is non-degenerate on $\mathcal{T}_x$, see \cite{W1990a}. We also recall that
  \[
  \delta h_i=m_i\delta q_i+m_iv_i\delta v_i=m_i\delta q_i+v_i\delta p_i.
  \end{equation*}

\end{remark}

Corresponding to the above choice of symplectic coordinates, the
considered monotone Q-form will be

\begin{equation}
Q_1(\delta h, \delta v)=\sum_{i=1}^n \delta h_i\delta v_i.
\end{equation}

It is clear that the evolution of $DS^t(\delta h(0), \delta
v(0))=(\delta h(t), \delta v(t))$ between colisions is

\begin{equation}\label{evolution}
\frac{d}{dt}\left(\delta h(t), \delta v(t)\right)=(0, 0).
\end{equation}

If a collision of type $(i, i+1)$ ($i=1,2,\dots,n-1$) takes place at
time $t_k$, then the derivative of the flow at the collision $\delta
h^-(t_k)\mapsto \delta h^+(t_k)$, $\delta v^-(t_k)\mapsto \delta
v^+(t_k)$ is given by the matrices

\begin{equation}\label{nonfloorcollision}
\begin{aligned}
  & \delta h^+(t_k)=R^*_i\left[\delta h^-(t_k)+S_i\delta v^-(t_k)\right] \\
  & \delta v^+(t_k)=R_i\delta v^-(t_k),
\end{aligned}
\end{equation}

where the matrix $R_i$ is the $n\times n$ identity matrix, except that
its $2\times 2$ submatrix at the crossings of the $i$-th and
$(i+1)$-st rows and columns is

\begin{equation*}
  R_i^{(i,i+1)} = \left[
    \begin{array}{cc}
      \gamma_i & 1-\gamma_i \\
      1+\gamma_i & -\gamma_i
    \end{array}
\right]
\end{equation*}

with $\gamma_i=\dfrac{m_i-m_{i+1}}{m_i+m_{i+1}}$. The matrix $S_i$ is,
similarly, the $n\times n$ zero matrix, except its $2\times 2$
submatrix at the crossings of the $i$-th and $(i+1)$-st rows and
columns, which takes the form of

\begin{equation*}
S_i^{(i, i+1)}=
\left[
    \begin{array}{cc}
      \alpha_i & -\alpha_i \\
      -\alpha_i & \alpha_i
    \end{array}
\right]
\end{equation*}

with

\begin{equation}\label{alpha}
  \alpha_i=\frac{2m_im_{i+1}(m_i-m_{i+1})}{(m_i+m_{i+1})^2}(v_i^- - v_{i+1}^-)>0.
\end{equation}

These formulas can be found, for example, in Sention 4 of \cite{W1990a}.

Concerning a floor collision $(0, 1)$ at time $t_k$, the
transformations are

\begin{equation}\label{floorcollision}
\begin{aligned}
  & \delta h_1^+(t_k)=\delta h_1^-(t_k) \\
  & \delta v_1^+(t_k)=\delta v_1^-(t_k) + \frac{2\delta h_1^-(t_k)}{m_1 v_1^+(t_k)},
\end{aligned}
\end{equation}

see, for instance, Section 4 of \cite{W1990a} or \cite{W1998}.

\bigskip

\section{Proof of the Transversality Conditions}

If one closely studies the ergodicity proofs based upon the Birkhoff-Sinai Zig-zag Method,
like the one in \cite{L-W1995}, one realizes that, whenever a property is needed to be proved
for singular phase points $x_0=(q_0,v_0)\in\mathcal{S}_0$ (like the transversality of $\mathcal{S}_k$
to $\mathcal{S}_0$ at $x_0$, or the transversality of the local stable manifold $\gamma^s(x_0)$ to
$\mathcal{S}_0$), it is always enough to estabslish the required property for almost every point
$x_0$ of $\mathcal{S}_0$ with respect to the hypersurface measure of $\mathcal{S}_0$.
This is what we do in this section.

\medskip

We will be focusing on the billiard map $(\partial M, T, \nu)$. Our first result asserts that
the system $(\partial M, T, \nu)$ has no focal points almost surely almost everywhere.

\begin{prp}

  For almost every selection of the masses $m_1>m_2>\dots>m_n$ it is true that for almost every phase
  point $(q_0, v_0)\in \partial M$, for every positive integer $k$, and for every small enough
  $\epsilon>0$ the map

  \begin{equation}\label{candle_maps_onto}
    \Pi\circ T^k:\; C_{\epsilon}(x_0) \to \partial Q
  \end{equation}

  is of full rank (i. e. locally onto) at $x_0$, where

  \begin{equation}\label{candle}
    C_{\epsilon}(x_0)=\left\{(q_0, v)\in \partial M\big|\; \Vert v-v_0\Vert<\epsilon\right\}
  \end{equation}

  is the so called ``candle manifold'', and

  \begin{equation}\label{projection}
    \Pi:\; \partial M\to \partial Q
  \end{equation}

  is the natural projection, taking $\Pi(q,v)=q$ for $(q,v)\in\partial M$.

\end{prp}

\begin{proof}

  It is enough to prove the proposition for a given $k$ and a given symbolic collision sequence
  $\Sigma=(\sigma_0, \sigma_1, \dots, \sigma_k)$, where $\sigma_l(i_l, i_l+1)$ tells that the
  collision $(i_l, i_l+1)$ takes place at $T^l(x_0)$, where $i_l=0$ indicates a floor collision.

  According to $(9)$ of \cite{W1990a}, the time evolution (abrupt change) at a ball-to-ball collision
  $(i, i+1)$ is given by

  \begin{equation}\label{v_collision}
    \begin{aligned}
    & v_i^+=\gamma_i v_i^- + (1-\gamma_i)v_{i+1}^- \\
    & v_{i+1}^+=(1+\gamma_i)v_i^- -\gamma_i v_{i+1}^-,
    \end{aligned}
  \end{equation}

where

\[
\gamma_i=\frac{m_i-m_{i+1}}{m_i+m_{i+1}},
\]

and

\begin{equation}\label{v_1_collision}
  v_1^+ = -v_1^-
  \end{equation}

for any floor collision. It is obvious that the time evolution between collisions is given by

  \begin{equation}\label{time_evolution}
     \frac{d}{dt}q_t = v_t,\; \frac{d}{dt}v_t = (-1,-1,\dots,-1).
  \end{equation}

  This means that the entire trajectory segment $\left\{T^l(x_0)\big|\; l=0, 1, \dots, k\right\}$
  is governed by rational functions of the initial data $(q_0, v_0, \vec{m})$, including the moments
  $t_l$ of the collisions $\sigma_l$.

  Furthermore, by taking derivative of the flow \ref{v_collision}--\ref{time_evolution}, one obtains that for any tangent
  vector

  \[
  \tau_0=(\delta q_0, \delta v_0)\in\mathcal{T}_{x_0}\partial M
  \]

  the images $D\psi^t[\tau_0]=\tau_t$ are evolving in time as follows:

  \begin{equation}\label{delta_v_collision}
    \begin{aligned}
    & \delta v_i^+=\gamma_i \delta v_i^- + (1-\gamma_i)\delta v_{i+1}^- \\
    & \delta v_{i+1}^+=(1+\gamma_i)\delta v_i^- -\gamma_i\delta v_{i+1}^-,
    \end{aligned}
    \end{equation}

  \begin{equation}\label{delta_q_collision}
    \begin{aligned}
    & \delta q_i^+=\gamma_i \delta q_i^- + (1-\gamma_i)\delta q_{i+1}^- \\
    & \delta q_{i+1}^+=(1+\gamma_i)\delta q_i^- -\gamma_i\delta q_{i+1}^-,
    \end{aligned}
    \end{equation}

for $i>0$, where $\delta v_j^{\pm}=(\delta v^{\pm})_{t_j}$, $\delta q_j^{\pm}=(\delta q^{\pm})_{t_j}$, 
for $j=i, i+1$, see (5) in \cite{W1990b}.

At a floor collision $(0,1)$ we clearly have

  \begin{equation}\label{delta_q_floor_collision}
      \delta q_1^+ = -\delta q_1^-,\; \delta v_1^+ = -\delta v_1^-.
  \end{equation}

  It is also clear that the time evolution of the image tangent vector
  $\tau_t=D\psi^t[\tau_0]=(\delta q_t, \delta v_t)$ between collisions is given by

  \begin{equation}\label{delta_time_evolution}
     \frac{d}{dt}\delta q_t = \delta v_t,\; \frac{d}{dt}\delta v_t=0.
  \end{equation}

  It follows from \ref{delta_v_collision}--\ref{delta_time_evolution} that the time evolution
  of the tangent vectors $\tau_t=D\psi^t[\tau_0]=(\delta q_t, \delta v_t)$ is also fully governed
  by finitely many rational functions of the initial data $(q_0, v_0, \vec{m})$.

  Finally, the negation of the assertion of the Proposition for a particular phase point
  $x_0=(q_0, v_0)$ means that the system of homogeneous linear equations

  \begin{equation}\label{homogeneous}
    \delta q_{t_k}=0
  \end{equation}

  has a nontrivial solution $\tau_0=(0, \delta v_0)$. This, in turn, means that cetain minors of
  this system vanish, i.e.

  \begin{equation}\label{vanishing}
    R_j(q_0, v_0, \vec{m})=0, \; j=1,2,\dots,m
  \end{equation}

  for certain rational functions $R_1, R_2,\dots, R_m$ of the initial variables.

  Consider now the limiting system with

  \begin{equation}\label{limiting}
    m_1=m_2=\dots=m_n>0.
  \end{equation}

  This system may not possess a trajectory segment with our prescribed symbolic collision
  sequence $\Sigma=(\sigma_0, \sigma_1,\dots, \sigma_k)$, yet all the rational functions $R_j$
  above are well defined, and they correspond to the time evolution of this limiting system,
  so that the unwanted collisions are annihilated in such a way that any two particles, about to
  making an unwanted collision, are let to penetrate through each other without interaction.
  This limiting system is essentially integrable, after the
  dynamic change of labels at collisions, as if the particles just penetrate through each other
  without interaction, see also the paragraph right after Corollary 2.23 in \cite{S2024}.

  In this limiting case, however, we have that $\delta q_t=t\delta v_0$, after the mentioned dynamic
  change of labels. This means, in turn, that the system of homogeneous linear equations
  \ref{homogeneous} only has the trivial solution $\tau_0=0$, therefore, not all rational functions
  $R_j$ are identically zero, even if we assume $m_1=m_2=\dots=m_n$.

  This completes the proof of Proposition 3.1.

\end{proof}

\medskip

Our next result is analogous to the statement of Proposition 3.1, claimed here for almost every singular phase point
$x_0=(q_0, v_0)\in\mathcal{S}_0$.

  \begin{prp}

    For almost every selection of the masses $m_1>m_2>\dots>m_n$ and for every positive integer $k$
    it is true that for almost every singular phase point $x_0=(q_0, v_0)\in\mathcal{S}_0$ and for all
    small enough $\epsilon>0$ the map in \ref{candle_maps_onto} is of maximum rank (i. e. locally onto)
    at $x_0$.

  \end{prp}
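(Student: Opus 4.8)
The plan is to follow the architecture of the proof of Proposition 3.1 almost verbatim, the one genuinely new ingredient being that the witness configuration must be produced \emph{on} the singularity manifold $\mathcal{S}_0$ rather than anywhere in $\partial M$. As before, I would fix the integer $k$ and a symbolic collision sequence $\Sigma=(\sigma_0,\sigma_1,\dots,\sigma_k)$, and in addition fix the combinatorial type of the singularity, i.e.\ which step $l^{*}$ carries the multiple (simultaneous) collision and which pairs of particles are involved. For this fixed type the stratum $\mathcal{S}_0$ is, locally, the smooth hypersurface cut out by a single rational equation $G(q_0,v_0,\vec m)=0$, namely the coincidence $t_a=t_b$ of the two collision times that merge at $T^{l^{*}}(x_0)$. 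Exactly as in \ref{v_collision}--\ref{delta_time_evolution}, the trajectory segment $\{T^l(x_0)\}_{l=0}^{k}$, its collision times, and the evolved tangent vector $\tau_t=D\psi^t[\tau_0]$ are all rational functions of $(q_0,v_0,\vec m)$, so the failure of \ref{candle_maps_onto} to be of maximal rank at $x_0$ is again equivalent to the simultaneous vanishing of the finitely many rational minors $R_1,\dots,R_m$ of the homogeneous system \ref{homogeneous}, as recorded in \ref{vanishing}.

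Consequently the exceptional (``bad'') set inside $\mathcal{S}_0$ is $\{G=0\}\cap\bigcap_{j=1}^{m}\{R_j=0\}$, and it suffices to show that this set is a proper (real-analytic) subvariety of the hypersurface $\{G=0\}$, hence of vanishing hypersurface measure. The point at which the present argument must do more than merely quote Proposition 3.1 is the following: since $\mathcal{S}_0$ is itself of codimension one, it could a priori coincide with an irreducible component of the bad locus $\bigcap_{j}\{R_j=0\}$, in which case $G=0$ would force every $R_j$ to vanish and the assertion would fail. Ruling out this conspiracy --- equivalently, proving that at least one $R_j$ does \emph{not} vanish identically along $\mathcal{S}_0$ --- is the crux of the matter and the principal obstacle.

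To exhibit such a witness I would reuse the degenerate limit $m_1=m_2=\dots=m_n>0$ of \ref{limiting}. After the dynamic change of labels the limiting system is integrable and the particles merely penetrate one another, so that $\delta q_t=t\,\delta v_0$ throughout; hence the homogeneous system \ref{homogeneous} has only the trivial solution $\tau_0=0$ and the rank is maximal. The decisive observation is that this computation never invokes the coincidence constraint, so it remains valid at configurations satisfying $G=0$: taking a point of the closure of $\mathcal{S}_0$ in the equal-mass limit, one finds that not all $R_j$ can vanish there, whence the $R_j$ are not identically zero on $\mathcal{S}_0$. The usual zero-set argument then shows that the bad set is a proper subvariety of $\mathcal{S}_0$ for almost every selection of masses, and a countable union over $k$, over $\Sigma$, and over the finitely many singularity types completes the proof.

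The step I expect to be genuinely delicate is the compatibility of the equal-mass degeneration with the singular stratum. One must verify that the chosen limit point really lies on the closure of $\mathcal{S}_0$ and that both $G$ and the minors $R_j$ extend continuously (indeed rationally) to it, since in the pass-through limit the ``collisions'' defining $G$ become fictitious coincidences of relabeled trajectories. Genuine double collisions between two disjoint pairs are harmless in this respect, but the triple-collision stratum --- three consecutive particles meeting at one instant --- requires extra care, because there the collision map is undefined and the symbolic sequence bifurcates; I would handle it by approaching such a point as a limit of nearby simple double collisions and checking that both the witness configuration and the non-vanishing of some $R_j$ survive the passage to the limit.
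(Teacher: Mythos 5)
Your proposal is correct, and its engine --- rationality of the trajectory and tangent data in $(q_0, v_0, \vec m)$, the vanishing minors $R_j$, and the equal-mass pass-through limit as the witness of non-degeneracy --- is exactly the paper's. Where you diverge is in how the restriction to $\mathcal{S}_0$ is organized, and here the paper is simpler than you anticipate. You describe $\mathcal{S}_0$ implicitly as $\{G=0\}$ with $G(q_0,v_0,\vec m)$ a coincidence of collision times at some step $l^*$, and you must then rule out the ``conspiracy'' that $\{G=0\}$ is contained in $\bigcap_j\{R_j=0\}$, worrying at the end whether the equal-mass witness lies in the closure of $\mathcal{S}_0$, whether $G$ and the $R_j$ extend to it, and how triple collisions bifurcate. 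The paper instead observes that $\mathcal{S}_0$ (a singular collision \emph{at time zero}) is cut out by one additional purely configurational equation --- an equation in the $q$-coordinates alone, independent of $v_0$ and of the masses --- so one simply eliminates one configuration coordinate, parametrizes the stratum by $\tilde q_0$ (with $n-2$ components) together with $v_0$, and repeats the proof of Proposition 3.1 verbatim with $q_0$ replaced by $\tilde q_0$. In that parametrization the minors are rational functions of free variables, the equal-mass specialization automatically stays on the stratum (the constraint involves neither $v_0$ nor $\vec m$), the branch at the multiple collision is already fixed by the choice of the symbolic sequence $\Sigma$, and Fubini in $\vec m$ yields the almost-every-mass conclusion directly. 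So the step you flag as ``genuinely delicate'' --- the closure/extension compatibility and the triple-collision limit --- is dissolved rather than resolved by the paper's coordinate elimination; your $v$-dependent, implicit-equation formulation (and the case $l^*>0$) aims at something more general than the stated proposition, but at the cost of precisely the unverified difficulties your sketch defers.
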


  \begin{proof}

    In Proposition 3.1, $q_0$ possessed $n-1$ independent coordinates. (Keep in mind, that $q_0\in\partial Q$, so the $q$ coordinates of two colliding
    particles are equal, or $q_1=0$.) When a singular collision takes place at time zero, i. e. $x_0=(q_0, v_0)\in\mathcal{S}_0$, then one more equation
    holds true for the configuration coordinates, so one more coordinate should be eliminated to work with independent configuration coordinates.
    The arising vector with $n-2$ coordinates is denoted by $\tilde{q}_0$. After this, the proof of this proposition is verbatim the same as that of
    Proposition 3.1, except that $q_0$ needs to be replaced everywhere by $\tilde{q}_0$.
    
    \end{proof}

  \medskip
  
  \begin{cor}[(Transversality of singularities of different order)]

    For almost every selection of the masses $m_1>m_2>\dots>m_n$ and for every positive integer $k$
    it is true that for almost every singular phase point $x_0=(q_0, v_0)\in\mathcal{S}_0$ if
    $x_k=T^k(x_0)$ happens to belong to $\mathcal{S}_0$, then $T^k(\mathcal{S}_0)$ and $\mathcal{S}_0$
    are transversal at $x_k$.
    
  \end{cor}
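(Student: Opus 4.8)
The plan is to deduce the transversality from the full-rank (``no focal point'') property established in the preceding Proposition, by exploiting the fact that the singularity set $\mathcal{S}_0$ is \emph{saturated} with respect to the candle fibration $\Pi$. The first step is to record the structure of $\mathcal{S}_0$: a singular phase point is one whose configuration $q_0$ realizes a multiple (double) collision, a condition imposed on the configuration alone. Hence, near a generic point, $\mathcal{S}_0$ is the full $\Pi$-preimage $\Pi^{-1}(\mathcal{N})$ of a codimension-one submanifold $\mathcal{N}\subset\partial Q$ of singular configurations. Two consequences are immediate and are all we shall need. First, the entire candle manifold $C_\epsilon(x_0)$ lies inside $\mathcal{S}_0$, so its tangent space $V_0=\mathcal{T}_{x_0}C_\epsilon(x_0)=\{(0,\delta v)\}$ satisfies $V_0\subset\mathcal{T}_{x_0}\mathcal{S}_0$. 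Second, at $x_k$ one has $D\Pi\bigl(\mathcal{T}_{x_k}\mathcal{S}_0\bigr)=\mathcal{T}_{q_k}\mathcal{N}$, a \emph{proper} (codimension-one) subspace of $\mathcal{T}_{q_k}\partial Q$.

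Next I would reduce the transversality to an inequality of tangent spaces. Both $T^k(\mathcal{S}_0)$ and $\mathcal{S}_0$ are smooth hypersurfaces (codimension one) in $\partial M$ near $x_k$, so they meet transversally at $x_k$ precisely when their tangent spaces are \emph{distinct}: if $DT^k\bigl(\mathcal{T}_{x_0}\mathcal{S}_0\bigr)\neq\mathcal{T}_{x_k}\mathcal{S}_0$, then the sum of these two codimension-one subspaces is all of $\mathcal{T}_{x_k}\partial M$, which is exactly transversality. It therefore suffices to exclude the coincidence $DT^k\bigl(\mathcal{T}_{x_0}\mathcal{S}_0\bigr)=\mathcal{T}_{x_k}\mathcal{S}_0$.

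The heart of the argument is then a one-line contradiction. Discard the measure-zero set of $x_0\in\mathcal{S}_0$ at which the conclusion of the preceding Proposition fails; for every remaining $x_0$ the map $\Pi\circ T^k$ restricted to $C_\epsilon(x_0)$ is locally onto at $x_0$, i.e. $D\Pi\circ DT^k$ carries $V_0$ \emph{onto} all of $\mathcal{T}_{q_k}\partial Q$. Suppose, toward a contradiction, that $DT^k\bigl(\mathcal{T}_{x_0}\mathcal{S}_0\bigr)=\mathcal{T}_{x_k}\mathcal{S}_0$. Since $V_0\subset\mathcal{T}_{x_0}\mathcal{S}_0$ by the first step, this forces $DT^k(V_0)\subset\mathcal{T}_{x_k}\mathcal{S}_0$, and applying $D\Pi$ yields $D\Pi\bigl(DT^k(V_0)\bigr)\subset D\Pi\bigl(\mathcal{T}_{x_k}\mathcal{S}_0\bigr)=\mathcal{T}_{q_k}\mathcal{N}$. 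But the left-hand side equals the whole space $\mathcal{T}_{q_k}\partial Q$ by the full-rank property, while the right-hand side is a proper subspace --- a contradiction. Hence the two tangent spaces differ and the singularity manifolds are transversal at $x_k$. Restricting the good set of $x_0$ further to those with $x_k=T^k(x_0)\in\mathcal{S}_0$ only shrinks it, so the conclusion holds for almost every singular $x_0$ with $x_k\in\mathcal{S}_0$, as claimed.

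The linear-algebra contradiction is immediate once the structures are in place, so I expect the only real obstacle to lie in the first step: the clean verification that $\mathcal{S}_0$ is genuinely $\Pi$-saturated, so that $V_0\subset\mathcal{T}_{x_0}\mathcal{S}_0$ and $D\Pi(\mathcal{T}_{x_k}\mathcal{S}_0)$ is a proper subspace. This requires excluding the degenerate configurations at which $\mathcal{N}$ itself fails to be a smooth codimension-one submanifold (e.g. triple collisions or coincidences of higher order); such configurations, however, form a further null set and can be discarded at the outset.
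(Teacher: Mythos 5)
Your proof is correct and follows essentially the same route as the paper: the paper's own proof is just a three-item sketch --- $(i)$ $C_\epsilon(x_0)\subset\mathcal{S}_0$, $(ii)$ $\Pi\circ T^k$ restricted to the candle is locally onto (the preceding Proposition for singular $x_0$), and $(iii)$ $\mathcal{S}_0$ is defined purely by the $q$-coordinates --- and your argument is exactly the fleshed-out linear-algebra combination of these three facts, with the $\Pi$-saturation of $\mathcal{S}_0$ playing the role of facts $(i)$ and $(iii)$. Your closing caveat about discarding degenerate (higher-order) configurations is a reasonable refinement that the paper leaves implicit in its ``almost every'' formulation.
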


  \begin{proof}
    The statement immediately follows from the facts that

    \begin{enumerate}
      \item[$(i)$] $C_{\epsilon}(x_0)\subset\mathcal{S}_0$,
      \item[$(ii)$] $\Pi\circ T^k:\; C_{\epsilon}(x_0)\to\partial Q$ is locally onto,
      \item[$(iii)$] $\mathcal{S}_0$ is defined purely in terms of the $q$ coordinates.
    \end{enumerate}

    \end{proof}

  \begin{cor}

    For almost every selection of the masses $m_1>m_2>\dots>m_n$ and for
    almost every singular phase point $x_0=(q_0, v_0)\in\mathcal{S}_0$ the local
    stable manifold $\gamma^s(x_0)$ of $x_0$ is transversal to the singularity
    $\mathcal{S}_0$.

    \end{cor}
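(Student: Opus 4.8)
The plan is to leverage the machinery already built for Proposition 3.1 and its singular analogue (Proposition 3.3), transferring the "full-rank" conclusion from the candle manifold to the stable manifold. Recall that the local stable manifold $\gamma^s(x_0)$ is a Lagrangian submanifold of $\partial M$ whose tangent space at $x_0$ is spanned by vectors $\tau_0 = (\delta q_0, \delta v_0)$ that contract under forward iteration of $DT$. The key structural fact is that for such stable vectors the $\delta q$ and $\delta v$ components are linked by the (negative) stable Q-form: in the monotone symplectic coordinates of Section 2, the stable subspace is the graph of a symmetric, negative-definite operator relating $\delta v$ to $\delta h$ (equivalently $\delta q$). Consequently, a nonzero stable tangent vector cannot have $\delta v_0 = 0$ with $\delta q_0 \ne 0$, nor vice versa in the degenerate direction; the stable cone is strictly separated from the "position" coordinate subspace $\{\delta v_0 = 0\}$ that defines the candle manifold.

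First I would set up the obstruction precisely. The failure of transversality of $\gamma^s(x_0)$ to $\mathcal{S}_0$ at $x_0$ means that the tangent space $T_{x_0}\gamma^s(x_0)$ contains a nonzero vector tangent to $\mathcal{S}_0$, i.e. a stable vector $\tau_0$ whose $\delta q_0$ component lies in the tangent space of the singularity hypersurface $\mathcal{S}_0$ (which, as noted in Corollary 3.4, is cut out purely in $q$-coordinates). Since $\gamma^s(x_0)$ is Lagrangian and has half the dimension of $\mathcal{T}_{x_0}$, transversality to the codimension-one singularity $\mathcal{S}_0$ is generic, and its failure imposes the vanishing of a determinant built from the stable subspace and the defining normal of $\mathcal{S}_0$. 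I would express this determinant, like the minors $R_j$ in Proposition 3.1, as a rational (more precisely, real-analytic) function of $(q_0, v_0, \vec m)$ together with the data specifying which forward symbolic sequence $\Sigma$ governs the stable manifold's construction.

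Next I would reduce to a countable union of analytic conditions and apply the same limiting-system argument as in Proposition 3.1. The local stable manifold is constructed as a limit of the preimages $DT^{-k}$ applied to the full position subspace (or to the contracting cone), so on each cylinder determined by a fixed forward itinerary $\Sigma = (\sigma_0, \sigma_1, \dots)$ the stable subspace depends real-analytically on the initial data and on $\vec m$. The transversality-failure locus is therefore an analytic subvariety, and it suffices—by the almost-everywhere principle quoted at the start of Section 3—to show that it is not all of $\mathcal{S}_0 \times \{\vec m\}$ for almost every mass vector. I would verify nondegeneracy at the integrable limit $m_1 = \dots = m_n$: there, after the dynamic relabeling that lets particles pass through each other, the dynamics is linear with $\delta q_t = \delta q_0 + t\,\delta v_0$, the stable subspace degenerates to an explicitly computable subspace, and one checks directly that it meets the tangent space of $\mathcal{S}_0$ only in the zero vector (equivalently, the stable direction has a nonzero velocity component transverse to $\mathcal{S}_0$). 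Hence the defining analytic function does not vanish identically, so its zero set has measure zero for almost every $\vec m$.

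The main obstacle I expect is the analyticity and limiting behavior of $\gamma^s(x_0)$ itself, which—unlike the finite trajectory segment used for Proposition 3.1—is defined through an infinite forward process and is a priori only measurable. To handle this rigorously I would invoke the standard fact (from the Pesin–Katok–Strelcyn theory underlying the zig-zag method) that stable manifolds exist and vary measurably, and then argue on each fixed forward cylinder $\Sigma$ where the relevant finite-time approximants $DT^{-k}(\{\delta v = 0\})$ are genuinely rational in the initial data; since the transversality condition is an open condition stable under the uniform convergence of these approximants to $T_{x_0}\gamma^s(x_0)$, the finite-time nondegeneracy at the integrable limit propagates to the limiting stable subspace. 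The delicate point is ensuring that the convergence is locally uniform in $(q_0, v_0, \vec m)$ so that the non-identical-vanishing survives the limit; this is where the monotonicity (strict invariance of the symplectic cone field guaranteed by the decreasing masses) does the essential work, providing the uniform hyperbolicity estimates needed to control the approximation.
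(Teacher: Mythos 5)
There is a genuine gap in the middle of your argument, even though your final paragraph comes close to the paper's actual proof. The paper's proof is short: it writes $\gamma^s(x_0)$ as the local $C^1$-uniform limit of the pullbacks $T^{-k}\left[C_\epsilon(T^k(x_0))\right]$, notes that each such pullback projects locally onto $\partial Q$ at $x_0$ (by the time-reversed versions of Propositions 3.1 and 3.3, which were already established by the rational-function/limiting-mass argument), observes that maximal rank is a $C^1$-open property, and concludes transversality because $\mathcal{S}_0$ is cut out purely by the $q$-coordinates. Your proposal instead tries to run the rational-function/limiting-mass argument \emph{directly on the infinite-time object}: you want to express the failure of transversality of $T_{x_0}\gamma^s(x_0)$ to $T_{x_0}\mathcal{S}_0$ as the vanishing of a function that is rational (or real-analytic) in $(q_0, v_0, \vec{m})$ on each forward cylinder, and then test non-vanishing at the integrable limit $m_1=m_2=\dots=m_n$. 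This step fails for two reasons. First, a set of phase points sharing a fixed \emph{infinite} forward itinerary has empty interior, so ``real-analytic on each cylinder'' and ``analytic subvariety'' have no standing; the stable subspace is only a measurable limit, not an algebraic datum, and no finite list of minors $R_j$ captures it. Second, and more fatally, at the equal-mass limit all hyperbolicity degenerates: the limiting system is integrable, \emph{no stable subspace exists there}, so there is nothing for the stable subspace to ``degenerate to an explicitly computable subspace,'' and the convergence of the finite-time approximants to the stable subspace cannot be locally uniform in $\vec{m}$ near the equal-mass locus, since the contraction rates tend to $1$ as $m_i-m_{i+1}\to 0$ (cf.\ the factor $\alpha_i$ in \ref{alpha}). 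The paper avoids this trap by confining the limiting-mass argument to the finite-time, fixed-$k$ statements of Propositions 3.1 and 3.3 --- where all quantities are genuinely rational --- and only afterwards, at a fixed typical mass vector, passing to the $k\to\infty$ limit via $C^1$-openness of maximal rank.

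A further concrete slip: the candle manifold $C_\epsilon(x_0)$ fixes the configuration $q_0$ and varies $v$, so its tangent space is the velocity fiber $\left\{(\delta q,\delta v)\,\big|\,\delta q=0\right\}$, not the ``position subspace $\{\delta v=0\}$'' that you propose to pull back by $DT^{-k}$. The full-rank statements available from the paper concern $\Pi\circ T^k$ restricted to velocity fibers; pulling back $\{\delta v=0\}$ would require a different full-rank result that has not been proved. Also, your opening observation about the stable subspace being a graph over the coordinate Lagrangian subspaces, while plausible, buys you nothing here: since $\mathcal{S}_0$ is defined purely in terms of $q$, its tangent space contains the entire velocity fiber, so the graph property alone cannot rule out $T_{x_0}\gamma^s(x_0)\subset T_{x_0}\mathcal{S}_0$; what is needed, and what the paper proves, is that $\Pi$ restricted to $\gamma^s(x_0)$ is locally onto $\partial Q$.
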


  \begin{proof}
    This statement follows from the fact that $\gamma^s(x_0)$ is the $C^1$-uniform limit (locally, near
    $x_0$) in the following way:

    \[
    \gamma^s(x_0)=\lim_{k\to\infty}T^{-k}\left[C_\epsilon(T^k(x_0))\right],
    \]

    the inverse images $T^{-k}\left[C_\epsilon(T^k(x_0))\right]$ project locally onto
    $\partial Q$ at $x_0$, and, finally, the maximum rank property is a $C^1$ open property.
    Hence the projection

    \[
    \Pi:\; \gamma^s(x_0)\to \partial Q
    \]

    is locally onto, and $\mathcal{S}_0$ is defined purely in terms of the $q$ coordinates
    in $\partial Q$.

    \end{proof}

  This completes the proof of our Main Theorem.

\medskip

\bibliographystyle{amsplain}

\end{document}